\theoremstyle{plain}
\newtheorem{theorem}{Theorem}[section]
\newtheorem{corollary}[theorem]{Corollary}
\newtheorem{lemma}[theorem]{Lemma}
\newcommand{\floor}[1]{\left\lfloor{#1}\right\rfloor}
\newcommand{\reel}{\mathbb{R}}
\newcommand{\comp}{\mathbb{C}}
\newcommand{\vf}{\varphi}
\newcommand{\abs}[1]{\left\vert #1\right\vert }
\newcommand{\adh}[1]{\overline{#1} }
\newcommand{\bg}{\medskip\goodbreak}
\newcommand{\itemref}[1]{\eqref{#1}}
\newenvironment{enumeratea}{\begin{enumerate}%
	[\upshape (a)]}{\end{enumerate}}
\title[Partial Fraction Expansions for Square Root Approximants]
{Partial Fraction Expansions for Newton's and Halley's Iterations for Square Roots}
\author[Omran Kouba]{Omran Kouba$^\dag$}
\address{Department of Mathematics \\
Higher Institute for Applied Sciences and Technology\\
P.O. Box 31983, Damascus, Syria.}
\email{omran\_kouba@hiast.edu.sy}
\keywords{Newton's method, Halley's method, Convergence, Series expansion, Square roots, Chebyshev's Polynomials.}
\subjclass[2010]{41A58, 30D05, 65B10.}
\thanks{$^\dag$ Department of Mathematics, Higher Institute for Applied Sciences and Technology.}
\begin{document}
\parindent=0pt
\begin{abstract}
When Newton's method, or Halley's method is used to approximate the $p${th} root of $1-z$, a sequence 
of rational functions is obtained. In this paper, a beautiful formula for these rational functions is proved in the square root case, using an interesting link to Chebyshev's polynomials. It allows the  determination of the sign of the coefficients of the power series expansion of these rational functions. This answers positively the square root case of a proposed conjecture by Guo(2010). \par
\end{abstract}
\smallskip\goodbreak

\parindent=0pt
\maketitle

\section{\bf Introduction }\label{sec1}
\bg
\parindent=0pt

\qquad There are several articles and research papers that deal with
 the determination of the sign pattern of the coefficients of a power series expansions (see \cite{kra},\cite{str} and the bibliography therein). In this work we consider this problem
in a particular case.\bg
\qquad Let $p$ be an integer greater than $1$, and $z$ any complex number. If we apply Newton's method to solve the equation $x^p=1-z$ starting from the initial value $1$, we obtain
the sequence of rational functions $(F_k)_{k\geq0}$ in the variable $z$ defined by the following iteration

\begin{equation*}
F_{k+1}(z)=\frac{1}{p}\left((p-1)F_k(z)+\frac{1-z}{F_k^{p-1}(z)}\right),\qquad F_0(z)\equiv 1.
\end{equation*}

\qquad Similarly, if we apply Halley's method to solve the equation $x^p=1-z$ starting from the same initial value $1$, we get
the sequence of rational functions $(G_k)_{k\geq0}$ in the variable $z$ defined by the iteration

\begin{equation*}
G_{k+1}(z)=\frac{(p-1)G_k^p(z)+(p+1)(1-z)}{(p+1)G_k^{p}(z)+(p-1)(1-z)}G_k(z),\qquad G_0(z)\equiv 1.
\end{equation*}

\qquad It was shown in \cite{guo} and more explicitly stated in \cite{kou} that both $F_k$ and 
$G_k$ have  power series expansions that are convergent in the neighbourhood of $z=0$, and that
these expansions start similar to the power series expansion of $z\mapsto\root{p}\of{1-z}$. More precisely,
the first $2^k$ coefficients of the power series expansion of $F_k$ are identical to the corresponding
coefficients in the power series expansion of $z\mapsto\root{p}\of{1-z}$, and
the same holds for the first $3^k$  coefficients of the power series expansion of $G_k$. It was conjectured
\cite[Conjecture 12]{guo}, that the coefficients of the power series expansions of $(F_k)_{k\geq2}$, $ (G_k)_{k\geq1}$ and  $z\mapsto\root{p}\of{1-z}$ have the same sign pattern. 
\bg
\qquad In this article, we will consider only the case $p=2$. In this case an unsuspected link to Chebyshev polynomials of the first and the second kind is discovered, it will allow us to find general formul\ae~ 
for $F_k$ and $G_k$ as sums of partial fractions. This will allow us to prove Guo's conjecture in this particular case. Finally, we note that for $p\geq3$ the conjecture remains open. 
\bg
\qquad Before proceeding to our results, let us fix some notation and definitions. We will use freely 
the properties of Chebyshev polynomials $(T_n)_{n\geq0}$ and
$(U_n)_{n\geq0}$ of the first and the second kind. In particular, they can be defined for $x>1$ by the folmul\ae   :
\begin{equation}\label{E:eq1}
\forall\,\vf\in\reel,\quad T_n(\cos\vf)=\cos(n\vf),\quad\hbox{and}\quad U_n=\frac{1}{n+1}T'_n.
\end{equation}

\qquad For these definitions and more on the properties of these polynomials we invite the reader
to consult \cite{mas} or any general treatise on special functions for example \cite[Chapter 22]{abr}, \cite[\$13.3-4]{arf} 
or \cite[Part six]{pol}, and the references therein. \bg
\qquad Let $\Omega =\comp\setminus[1,+\infty)$, (this is the complex plane cut along the real numbers greater or equal to 1.) For $z\in \Omega$, we denote by $\sqrt{1-z}$ the square root of $1-z$ with positive real part. We know that $z\mapsto \sqrt{1-z}$
is holomorphic in $\Omega$. Moreover, 
\begin{equation}\label{E:bin}
\forall\, z\in\adh{D(0,1)},\qquad \sqrt{1-z}=\sum_{m=0}^\infty\lambda_m z^m,
\end{equation}
where
\begin{equation}\label{E:bin1}
 \forall\,m\geq0,\qquad\lambda_m=\frac{1}{(1-2m)2^{2m}}\binom{2m}{m}.
\end{equation}

\qquad The standard result, is that \eqref{E:bin} is true for $z$ in the open unit disk, but the fact that for $n\geq1$ we have $\lambda_n=\mu_{n-1}-\mu_n<0$, where $\mu_n=2^{-2n}\binom{2n}{n}\sim1/\sqrt{\pi n}$, proves the uniform convergence of the series $\sum\lambda_mz^m$ in
the closed unit disk, and \eqref{E:bin} follows by Abel's Theorem since $z\mapsto \sqrt{1-z}$ can be continuously extended to  $\adh{D(0,1)}$.
\bg
\qquad Finally, we consider the sequences of
rational functions $(V_n)_{n\geq0}$, $(F_n)_{n\geq0}$ and $(G_n)_{n\geq0}$ defined by
\begin{align}
V_{n+1}(z)&=\frac{1-z+V_n(z)}{1+V_n(z)}, && V_0(z)\equiv 1,\label{E:eqv}\\
F_{n+1}(z)&=\frac{1}{2}\left(F_n(z)+\frac{1-z}{F_n (z)}\right), && F_0(z)\equiv 1,\label{E:eqf}\\
G_{n+1}(z)&=\frac{ G_n^3(z)+3(1-z)G_n(z)}{3G_n^{2}(z)+1-z}, && G_0(z)\equiv 1,\label{E:eqg}
\end{align}
\bg
where $(F_n)_{n\geq0}$ and $(G_n)_{n\geq0}$ are Newton's and Halley's iterations mentioned before, (in the case $p=2$.) Since the sequence $(V_n)_{n\geq0}$ is simpler than the other two sequences, we 
will prove our main result for the $V_n$'s, then  we will deduce the corresponding properties for $F_n$ and $G_n$.
\bigskip\goodbreak

\section{\bf The Main Results }\label{sec2}
\bg

\qquad We start this section by proving a simple property of that shows why it is sufficient to
study the sequence of $V_n$'s to deduce the properties of Newton's and Halley's
iterations the $F_n$'s and $G_n$'s :
\bg
\begin{lemma}\label{lm1}
The sequences $(V_n)_n$, $(F_n)_n$ and $(G_n)_n$ of the rational functions defined inductively by \eqref{E:eqv},  \eqref{E:eqf} and  \eqref{E:eqg}, satisfy the following properties :
\begin{enumeratea}
\item For  $z\in\Omega$ and $n\geq0$, we have :
\[
\frac{V_n(z)-\sqrt{1-z}}{V_n(z)+\sqrt{1-z}}
  =\left(\frac{1-\sqrt{1-z}}{1+\sqrt{1-z}}\right)^{n+1},
\]\label{itlm11}
\item For $n\geq0$ we have $F_n=V_{2^n-1}$, and $G_n=V_{3^n-1}$.\label{itlm12}
\end{enumeratea}
\end{lemma}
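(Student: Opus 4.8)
The plan is to establish part (a) first, since it is the analytic heart of the lemma, and then derive part (b) as a purely algebraic consequence. For part (a), the natural device is the \emph{error ratio}, often called the Cayley transform or the ``root-finding'' substitution. Define, for $z\in\Omega$,
\[
\Phi_n(z)=\frac{V_n(z)-\sqrt{1-z}}{V_n(z)+\sqrt{1-z}},
\]
and let $\omega=\omega(z)=\dfrac{1-\sqrt{1-z}}{1+\sqrt{1-z}}$ be the claimed common ratio. The goal is to show $\Phi_n=\omega^{\,n+1}$. I would proceed by induction on $n$. For the base case $n=0$, since $V_0\equiv1$, one has $\Phi_0=\dfrac{1-\sqrt{1-z}}{1+\sqrt{1-z}}=\omega=\omega^{1}$, as required.

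\textbf{The inductive step} is the computational core, and I expect it to be the main obstacle only in the sense of requiring a clean algebraic simplification. Assuming $\Phi_n=\omega^{\,n+1}$, I substitute the defining recursion \eqref{E:eqv} into $\Phi_{n+1}$. Writing $s=\sqrt{1-z}$ so that $s^2=1-z$, the numerator of $V_{n+1}$ is $1-z+V_n=s^2+V_n$ and the denominator is $1+V_n$. A direct computation of
\[
\Phi_{n+1}=\frac{V_{n+1}-s}{V_{n+1}+s}
  =\frac{(s^2+V_n)-s(1+V_n)}{(s^2+V_n)+s(1+V_n)}
\]
factors the numerator as $(s^2-s)+(V_n-sV_n)=-s(1-s)+(1-s)V_n=(1-s)(V_n-s)$ and the denominator as $(s^2+s)+(V_n+sV_n)=s(1+s)+(1+s)V_n=(1+s)(V_n+s)$. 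Hence
\[
\Phi_{n+1}=\frac{1-s}{1+s}\cdot\frac{V_n-s}{V_n+s}=\omega\,\Phi_n=\omega\cdot\omega^{\,n+1}=\omega^{\,n+2},
\]
which closes the induction. This factorization is the one genuinely clever step; everything else is bookkeeping. Throughout I should note that $V_n(z)+\sqrt{1-z}$ does not vanish on $\Omega$, so that all the ratios are well-defined; this follows because $\omega(z)\neq\pm1$ on $\Omega$ and $V_n$ stays in the right half-plane, keeping it away from $-\sqrt{1-z}$.

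\textbf{For part (b)}, the idea is that the error ratio $\Phi_n=\omega^{\,n+1}$ reduces the behaviour of the iteration to a single exponent, and the Newton and Halley maps multiply this exponent rather than merely incrementing it. Concretely, if I define $\Psi_F$ and $\Psi_G$ as the corresponding error ratios for $F_k$ and $G_k$, then a short computation analogous to the one above, applied to \eqref{E:eqf} and \eqref{E:eqg}, should yield $\Psi_F$ squaring and $\Psi_G$ cubing the ratio at each step: the Newton step gives $(F_{k+1}-s)/(F_{k+1}+s)=\bigl((F_k-s)/(F_k+s)\bigr)^2$ and the Halley step gives the cube. Starting from $F_0=G_0=1=V_0$, induction then gives that $F_k$ and $V_{2^k-1}$ share the error ratio $\omega^{\,2^k}$, and $G_k$ and $V_{3^k-1}$ share $\omega^{\,3^k}$. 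Since the error ratio determines the rational function uniquely (solving $\Phi=\omega^{m}$ for $V$ gives $V=s\,\frac{1+\omega^m}{1-\omega^m}$, a single-valued rational function of $z$), I conclude $F_k=V_{2^k-1}$ and $G_k=V_{3^k-1}$, completing the proof.
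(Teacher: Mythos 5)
Your proposal follows essentially the same route as the paper: the same error ratio $\Phi_n$, the same factorization of numerator and denominator into $(1\mp s)(V_n\mp s)$ in the inductive step, and the same squaring/cubing identities for the Newton and Halley maps, from which part (b) follows by matching exponents and inverting the M\"obius transform. The only point of substance where you diverge is the justification that all denominators are nonzero on $\Omega$. Your claim that ``$V_n$ stays in the right half-plane'' is false as stated: already $V_1(z)=1-z/2$ has negative real part whenever $\Re z>2$, and such points (with nonzero imaginary part) lie in $\Omega$. What the induction actually hands you is that $V_n/\sqrt{1-z}=(1+\omega^{n+1})/(1-\omega^{n+1})$ has positive real part, since $\abs{\omega(z)}<1$ throughout $\Omega$ (because $\Re\sqrt{1-z}>0$); this is enough to conclude both $V_n+\sqrt{1-z}\neq0$ and $1+V_n\neq0$, which is all the inductive step requires. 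The paper avoids the issue by a different device: it proves the identity only for $x\in(0,1)$, where positivity of $V_n(x)$ is immediate, and then extends to $\Omega$ by the identity theorem after observing that the left-hand side is meromorphic with at most finitely many poles, which are then forced to be removable. Either repair works, but as written your non-vanishing argument does not hold up and should be replaced by the half-plane statement about $V_n/\sqrt{1-z}$ rather than about $V_n$ itself.
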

\begin{proof}
First, let us suppose that $z=x\in(0,1)$. In this case, we see by induction
that all the terms of the sequence $(V_n(x))_{n\geq0}$ are well defined and positive,
and we have the following recurrence relation :
 \begin{align*}
\frac{V_{n+1}(x)-\sqrt{1-x}}{V_{n+1}(x)+\sqrt{1-x}}&=
\frac{1-x+V_n(x)-\sqrt{1-x}(1+V_n(x))}{1-x+V_n(x)+\sqrt{1-x}(1+V_n(x))}\\
&=\frac{1-\sqrt{1-x}}{1+\sqrt{1-x}}\cdot
\frac{V_n(x)-\sqrt{1-x}}{V_n(x)+\sqrt{1-x}}.
\end{align*}
Therefore, using simple induction, we obtain
\begin{equation}\label{E:eq2}
\forall\,n\geq0,\qquad
  \frac{V_n(x)-\sqrt{1-x}}{V_n(x)+\sqrt{1-x}}
  =\left(\frac{1-\sqrt{1-x}}{1+\sqrt{1-x}}\right)^{n+1},
\end{equation}
Now, for a given $n\geq0$, let us define $R(z)$ and $L(z)$ by the formul\ae\ :
\[
L(z)=\frac{V_n(z)-\sqrt{1-z}}{V_n(z)+\sqrt{1-z}}\quad\hbox{and}\quad R(z)= \left(\frac{1-\sqrt{1-z}}{1+\sqrt{1-z}}\right)^{n+1}.
\]
Clearly, $R$ is analytic in $\Omega$ since $\Re(\sqrt{1-z})>0$ for $z\in\Omega$. On the other hand, if $V_n=A_n/B_n$ where $A_n$ and $B_n$ are two co-prime polynomials then 
\[
L(z)=\frac{(A_n(z)-\sqrt{1-z}B_n(z))^2}{A_n^2(z)-(1-z)B_n^2(z)}.
\]
So $L$ is meromorphic with, (at the most,) a finite number
of poles in $\Omega$. Using analyticity, we conclude from \eqref{E:eq2} that
$L(z)=R(z)$ for $z\in\Omega\setminus\mathcal{P}$ for some finite set $\mathcal{P}
\subset\Omega$, but this implies that the points in $\mathcal{P}$ are
removable singularities, and that $L$ is holomorphic and identical to $R$ in
$\Omega$. This concludes the proof of \itemref{itlm11}.
\bg
\qquad To prove \itemref{itlm12}, consider $z=x\in(0,1)$, as before,
we see by induction
that all the terms of the sequences $(F_n(x))_{n\geq0}$ and $(G_n(x))_{n\geq0}$ 
are well-defined and positive. Also, we check easily, using
\eqref{E:eqf} and \eqref{E:eqg}, that the following recurrence relations 
hold :
\begin{align*}
\frac{F_{n+1}(x)-\sqrt{1-x}}{F_{n+1}(x)+\sqrt{1-x}}&=
\left(\frac{1-\sqrt{1-x}}{1+\sqrt{1-x}}\right)^2\cdot
\frac{F_{n}(x)-\sqrt{1-x}}{F_{n}(x)+\sqrt{1-x}},\\
\noalign{\hbox{and}}\\
\frac{G_{n+1}(x)-\sqrt{1-x}}{G_{n+1}(x)+\sqrt{1-x}}&=
\left(\frac{1-\sqrt{1-x}}{1+\sqrt{1-x}}\right)^3\cdot
\frac{G_{n}(x)-\sqrt{1-x}}{G_{n}(x)+\sqrt{1-x}}.
\end{align*}
It follows that
\begin{align*}
\frac{F_{n}(x)-\sqrt{1-x}}{F_{n}(x)+\sqrt{1-x}}&=
\left(\frac{1-\sqrt{1-x}}{1+\sqrt{1-x}}\right)^{2^n}=
\frac{V_{2^n-1}(x)-\sqrt{1-x}}{V_{2^n-1}(x)+\sqrt{1-x}},\\
\noalign{\hbox{and}}\\
\frac{G_{n}(x)-\sqrt{1-x}}{G_{n}(x)+\sqrt{1-x}}&=
\left(\frac{1-\sqrt{1-x}}{1+\sqrt{1-x}}\right)^{3^n}=
\frac{V_{3^n-1}(x)-\sqrt{1-x}}{V_{3^n-1}(x)+\sqrt{1-x}}.
\end{align*}
Hence $F_n(x)=V_{2^n-1}(x)$ and $G_n(x)=V_{3^n-1}(x)$ for every
$x\in(0,1)$, and \itemref{itlm12} follows by analyticity.
\end{proof}

\qquad Lemma \ref{lm1} provides a simple proof  of the following known result.\bg

\begin{corollary}\label{co1}
The sequences 
$(V_n)_n$, $(F_n)_n$ and $(G_n)_n$ of the rational functions defined inductively by \eqref{E:eqv},  \eqref{E:eqf} and  \eqref{E:eqg}, converge 
uniformly on compact subsets of $\Omega$ to the function $z\mapsto\sqrt{1-z}$.
\end{corollary}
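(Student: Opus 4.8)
The plan is to read the convergence straight off the closed-form ratio supplied by Lemma \ref{lm1}. Write $w(z)=\sqrt{1-z}$ and introduce
\[
\rho(z)=\frac{1-\sqrt{1-z}}{1+\sqrt{1-z}},
\]
so that the identity in Lemma \ref{lm1}\itemref{itlm11} becomes $(V_n-w)/(V_n+w)=\rho^{\,n+1}$. The first thing I would establish is the pointwise bound $\abs{\rho(z)}<1$ for every $z\in\Omega$. Since $\Re(w(z))>0$ on $\Omega$, a direct expansion gives $\abs{1+w}^2-\abs{1-w}^2=4\,\Re(w)>0$, whence $\abs{1-w}<\abs{1+w}$ and thus $\abs{\rho}<1$. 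In particular $1+w$ never vanishes, so $\rho$ is well defined and holomorphic on $\Omega$.

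Next I would solve the ratio identity for $V_n$ itself. Putting $q=\rho^{\,n+1}$, the relation $V_n-w=q(V_n+w)$ gives $V_n=w\,(1+q)/(1-q)$, and therefore an explicit expression for the error,
\[
V_n(z)-\sqrt{1-z}=\sqrt{1-z}\cdot\frac{2\,\rho^{\,n+1}(z)}{1-\rho^{\,n+1}(z)}.
\]
This displays the error as a quantity governed entirely by the powers $\rho^{\,n+1}$.

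Now fix a compact set $K\subset\Omega$. Because $\rho$ is continuous with $\abs{\rho}<1$ on $K$, it attains a maximum $r:=\max_{z\in K}\abs{\rho(z)}<1$; similarly $M:=\max_{z\in K}\abs{\sqrt{1-z}}<\infty$. For every $n$ and every $z\in K$ we then have $\abs{\rho^{\,n+1}(z)}\le r^{\,n+1}$ and $\abs{1-\rho^{\,n+1}(z)}\ge 1-r^{\,n+1}\ge 1-r>0$, so the error formula yields
\[
\sup_{z\in K}\,\abs{V_n(z)-\sqrt{1-z}}\le\frac{2M\,r^{\,n+1}}{1-r},
\]
which tends to $0$ as $n\to\infty$. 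This gives uniform convergence of $(V_n)$ to $\sqrt{1-z}$ on $K$, hence on every compact subset of $\Omega$. Finally, by Lemma \ref{lm1}\itemref{itlm12} we have $F_n=V_{2^n-1}$ and $G_n=V_{3^n-1}$; since $2^n-1\to\infty$ and $3^n-1\to\infty$, the sequences $(F_n)$ and $(G_n)$ are obtained from $(V_n)$ by selecting indices, and so they inherit the same uniform convergence on compact subsets.

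I do not anticipate a serious obstacle here: once part \itemref{itlm11} is available the statement reduces to a routine geometric estimate. The only points deserving care are the strict inequality $\abs{\rho}<1$, which rests on $\Re(\sqrt{1-z})>0$ for $z\in\Omega$, and the passage from this pointwise bound to the uniform bound $r<1$ on $K$ — the single place where compactness is genuinely used.
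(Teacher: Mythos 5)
Your proposal is correct and follows essentially the same route as the paper: the paper's proof simply invokes Lemma \ref{lm1} together with the bound $\sup_{z\in K}\abs{(1-\sqrt{1-z})/(1+\sqrt{1-z})}<1$ on compact $K\subset\Omega$ (which it calls ``easy to prove''), and you have supplied exactly the details it omits, including the key observation $\abs{1+w}^2-\abs{1-w}^2=4\Re(w)>0$.
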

\begin{proof}
Indeed, this follows from Lemma \ref{lm1} and the fact that for every non-empty compact set $K\subset\Omega$ we have :
\[
\sup_{z\in K}\abs{\frac{1-\sqrt{1-z}}{1+\sqrt{1-z}}}<1,
\]
which is easy to prove.
\end{proof}
\bg
\qquad One can also use Lemma \ref{lm1} to study $V_n$ in the neighbourhood of $z=0$ :
\bg
\begin{corollary}\label{co2}
For every $n\geq0$, and for $z$ in the neighbourhood of $0$ we have
\begin{align}
V_n(z)&=\sqrt{1-z}+O(z^{n+1}),\label{E:eq3}\\
\noalign{\hbox{and}}
V_n(z)&=\sum_{m=0}^n\lambda_mz^m+O(z^{n+1}).\label{E:eq4}
\end{align}
where $\lambda_m$ is defined by \eqref{E:bin1}
\end{corollary}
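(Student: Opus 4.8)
The plan is to read off both asymptotics directly from the closed form for the cross‑ratio supplied by Lemma~\ref{lm1}\itemref{itlm11}, rather than from the defining recurrence. Set
\[
\rho(z)=\frac{1-\sqrt{1-z}}{1+\sqrt{1-z}},\qquad L(z)=\rho(z)^{n+1},
\]
so that Lemma~\ref{lm1}\itemref{itlm11} reads $\frac{V_n(z)-\sqrt{1-z}}{V_n(z)+\sqrt{1-z}}=L(z)$ for $z\in\Omega$. The whole content of the corollary is already packaged in this identity; what remains is to extract the order of vanishing.

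First I would record that $\rho(z)=O(z)$ near $0$. Rationalizing gives $1-\sqrt{1-z}=\frac{z}{1+\sqrt{1-z}}$, whence $\rho(z)=\frac{z}{(1+\sqrt{1-z})^2}$; since $\sqrt{1-z}\to1$ as $z\to0$ the denominator tends to $4$ and $\rho$ is holomorphic at $0$ with $\rho(0)=0$. Raising to the power $n+1$ then yields $L(z)=O(z^{n+1})$.

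Next I would solve the cross‑ratio identity for $V_n$. From $\frac{V_n-\sqrt{1-z}}{V_n+\sqrt{1-z}}=L$ one obtains $V_n=\sqrt{1-z}\,\frac{1+L}{1-L}$, and therefore
\[
V_n(z)-\sqrt{1-z}=\sqrt{1-z}\cdot\frac{2L(z)}{1-L(z)}.
\]
Because $L(z)\to0$ as $z\to0$, the factor $1-L(z)$ stays bounded away from $0$ near the origin while $\sqrt{1-z}$ is bounded, so the right‑hand side is $O(z^{n+1})$; this is precisely \eqref{E:eq3}. One preliminary remark is worth making so that the $O(z^{n+1})$ notation is legitimate: an immediate induction on the recurrence \eqref{E:eqv} gives $V_n(0)=1$, hence $1+V_n(0)=2\neq0$ and each $V_n$ is holomorphic at $0$, so the big‑$O$ really refers to the remainder of a genuine power series.

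Finally, \eqref{E:eq4} follows by inserting the binomial expansion \eqref{E:bin}. Since $\sum_{m=n+1}^\infty\lambda_mz^m=O(z^{n+1})$, combining with \eqref{E:eq3} gives
\[
V_n(z)=\sum_{m=0}^\infty\lambda_mz^m+O(z^{n+1})=\sum_{m=0}^n\lambda_mz^m+O(z^{n+1}).
\]
I do not anticipate any real obstacle: the only step requiring a line of care is the observation that $V_n$ has no pole at $0$, and everything else is the algebraic inversion of the Möbius relation from Lemma~\ref{lm1} together with the elementary estimate $\rho(z)=O(z)$.
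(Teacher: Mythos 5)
Your proposal is correct and follows essentially the same route as the paper: both solve the cross-ratio identity of Lemma~\ref{lm1}\itemref{itlm11} for $V_n$ (the paper's quotient of $(1\pm\sqrt{1-z})^{n+1}$ terms is exactly your $\sqrt{1-z}\,(1+L)/(1-L)$), use $1-\sqrt{1-z}=z/(1+\sqrt{1-z})$ to extract the factor $z^{n+1}$, and then invoke the binomial expansion \eqref{E:bin} for \eqref{E:eq4}. Your extra remark that $V_n$ is holomorphic at $0$ is a harmless (and slightly more careful) addition.
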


\begin{proof}
Indeed, using Lemma \ref{lm1}  we have :
\begin{align*}
V_n(z)&=\sqrt{1-z}\cdot
\frac{(1+\sqrt{1-z})^{n+1}+(1-\sqrt{1-z})^{n+1}}{(1+\sqrt{1-z})^{n+1}-(1-\sqrt{1-z})^{n+1}},\\
&=\sqrt{1-z}+\frac{2\sqrt{1-z}}{(1+\sqrt{1-z})^{n+1}-(1-\sqrt{1-z})^{n+1}}\cdot (1-\sqrt{1-z})^{n+1},\\
&=\sqrt{1-z}+\frac{2\sqrt{1-z}}{(1+\sqrt{1-z})^{n+1}-(1-\sqrt{1-z})^{n+1}}\cdot \frac{z^{n+1}}{(1+\sqrt{1-z})^{n+1}},\\
&=\sqrt{1-z}+\frac{2\sqrt{1-z}}{(1+\sqrt{1-z})^{2(n+1)}-z^{n+1}}\cdot z^{n+1},
\end{align*}
In particular, for $z$ in the neighbourhood of $0$, we have 
$V_n(z)=\sqrt{1-z}+O(z^{n+1})$, which is \itemref{E:eq3}. 

On the other hand, using \eqref{E:bin}, 
we obtain 
\[
V_n(z)=\sum_{m=0}^n\lambda_mz^m+O(z^{n+1})
\]
which is \itemref{E:eq4}.
\end{proof}

\qquad Recall that we are interested in the sign pattern of the coefficients
of the power series expansion of $F_n$ and $G_n$, in the neighbourhood of $z=0$.
Lemma \ref{lm1} reduces the problem to finding sign pattern of the coefficients
of the power series expansion of $V_n$. But, $V_n$ is rational function
and a partial fraction decomposition would be helpful. The next theorem 
is our main result :
\bg
 
\begin{theorem}\label{th21}
Let $n$ be a positive integer, and let $V_n$ be the rational function defined by the recursion \eqref{E:eqv}.  Then the partial fraction decomposition of $V_n$ is as follows :
\begin{align*}
V_1(z)&=1-\frac{z}{2},\\
V_n(z)&=1-\frac{z}{2}-\frac{z^2}{2(n+1)}\sum_{k=1}^{\floor{n/2}}\frac{
\sin^2(\frac{2\pi k}{n+1})}{1-z\cos^2(\frac{\pi k}{n+1})},&&\hbox{for $n\geq2$.}
\end{align*}
\end{theorem}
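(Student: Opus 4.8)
The plan is to replace the recursion for $V_n$ by a single closed form in terms of Chebyshev polynomials, and then read off the poles and residues directly; matching these against the claimed expression, and controlling the entire part, will finish the proof.

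\emph{Step 1: a Chebyshev closed form.} I would start from the identity already obtained in the proof of Corollary~\ref{co2},
\[
V_n(z)=\sqrt{1-z}\,\frac{(1+\sqrt{1-z})^{n+1}+(1-\sqrt{1-z})^{n+1}}{(1+\sqrt{1-z})^{n+1}-(1-\sqrt{1-z})^{n+1}}.
\]
Writing $w=\sqrt{1-z}$ and $x=1/\sqrt z$, one checks for $z\in(0,1)$ that $\sqrt z\,(x\pm\sqrt{x^2-1})=1\pm w$. Feeding this into the standard formulae $2T_{n+1}(x)=(x+\sqrt{x^2-1})^{n+1}+(x-\sqrt{x^2-1})^{n+1}$ and $2\sqrt{x^2-1}\,U_n(x)=(x+\sqrt{x^2-1})^{n+1}-(x-\sqrt{x^2-1})^{n+1}$ makes the whole quotient collapse to
\[
V_n(z)=\sqrt z\,\frac{T_{n+1}(1/\sqrt z)}{U_n(1/\sqrt z)}.
\]
Using the parities $T_{n+1}(-x)=(-1)^{n+1}T_{n+1}(x)$ and $U_n(-x)=(-1)^nU_n(x)$, the right-hand side is invariant under $\sqrt z\mapsto-\sqrt z$, hence is a genuine rational function of $z$; so this identity, verified on $(0,1)$, holds identically.

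\emph{Step 2: poles and residues.} Since $T_{n+1}$ and $U_n$ share no zero, the finite poles of $V_n$ are the images under $s\mapsto 1/s^2$ of the zeros $s=\cos\theta_k$, $\theta_k=\pi k/(n+1)$, of $U_n$. Because $\cos^2\theta_k=\cos^2\theta_{n+1-k}$, these coincide in pairs and yield the distinct simple poles $z_k=1/\cos^2\theta_k$ for $k=1,\dots,\floor{n/2}$ (when $n$ is odd the value $k=(n+1)/2$ gives $s=0$, i.e. $z=\infty$, and is discarded). To compute the residue I would use $T_{n+1}(\cos\theta_k)=\cos(k\pi)=(-1)^k$ together with
\[
U_n'(\cos\theta_k)=-\frac{(n+1)(-1)^k}{\sin^2\theta_k},
\]
obtained by differentiating $U_n(\cos\theta)=\sin((n+1)\theta)/\sin\theta$ at a zero of the numerator. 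The chain rule applied to $z\mapsto U_n(1/\sqrt z)$ then gives
\[
\operatorname*{Res}_{z=z_k}V_n=\frac{2\sin^2\theta_k}{(n+1)\cos^4\theta_k}.
\]

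\emph{Step 3: matching and the entire part.} On the other side, the $k$-th summand of the proposed formula has residue $\dfrac{z_k^2\sin^2(2\theta_k)}{2(n+1)\cos^2\theta_k}$ at $z_k$, and the elementary identities $z_k=1/\cos^2\theta_k$ and $\sin^2(2\theta_k)=4\sin^2\theta_k\cos^2\theta_k$ show this equals the residue of Step~2. Hence the difference $D(z)$ between $V_n$ and the claimed right-hand side is a rational function with no finite pole, i.e. a polynomial. Both $V_n$ and the right-hand side grow at most linearly at infinity (each fraction $z^2/(1-z\cos^2\theta_k)$ being linear there), so $\deg D\le1$; and both expand as $1-\tfrac z2+O(z^2)$ near $0$ — for $V_n$ by Corollary~\ref{co2} — so $D(0)=D'(0)=0$. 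A polynomial of degree $\le1$ vanishing to second order at $0$ is identically $0$, which proves the formula for $n\ge2$; the case $n=1$ is the direct computation $V_1=(2-z)/2$. The conceptual crux is discovering the closed form of Step~1, after which everything is explicit; the step demanding the most care is the global one, since each fraction $z^2/(1-z\cos^2\theta_k)$ carries a spurious linear part at infinity, and it is exactly the cancellation of these against $1-\tfrac z2$ — certified here by the degree bound together with the second-order vanishing at $0$ — that upgrades the local (residue) agreement to a genuine identity.
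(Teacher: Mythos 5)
Your proposal is correct and follows essentially the same route as the paper: the Chebyshev closed form $V_n(z)=\sqrt{z}\,T_{n+1}(1/\sqrt{z})/U_n(1/\sqrt{z})$ (the paper's identity \eqref{E:eq5} with $z=x^2$), simple poles at $\sec^2(k\pi/(n+1))$ with residues computed from $U_n'(\cos\vf)$, and a low-degree polynomial remainder pinned down by the expansion $1-\tfrac z2+O(z^2)$ from Corollary \ref{co2}. The only difference is bookkeeping: you exploit the evenness in $x=\sqrt{z}$ at the outset and carry out the partial-fraction decomposition directly in the variable $z$ (degree bound $\leq 1$), whereas the paper decomposes in $x$ over all poles $\sec(k\pi/(n+1))$ and then symmetrizes the pairs $k\leftrightarrow n+1-k$ to fold the sum back into a function of $z=x^2$.
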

\bg
\begin{proof}
Let us recall the fact that for $x>1$  Chebyshev polynomials of the first and the second kind satisfy the following identities :
\begin{align}
T_n(x) & =\frac{(x+\sqrt{x^2-1})^n+(x-\sqrt{x^2-1})^n}{2},\label{E:eqch1}\\
U_n(x) & =\frac{(x+\sqrt{x^2-1})^{n+1}-(x+\sqrt{x^2-1})^{n+1}}{2\sqrt{x^2-1}}.\label{E:eqch2}
\end{align}
Thus, for $0<x<1$ we have
\begin{align*}
x^nT_n\left(\frac1x\right) & =\frac{(1+\sqrt{1-x^2})^n+(1-\sqrt{1-x^2})^n}{2},\\
x^n U_n\left(\frac1x\right) & =\frac{(1+\sqrt{1-x^2})^{n+1}-(1+\sqrt{1-x^2})^{n+1}}{2\sqrt{1-x^2}}.
\end{align*}
So, using by Lemma \ref{lm1}(a),  we find that
\begin{equation}\label{E:eq5}
V_n(x^2)=\frac{x^{n+1}T_{n+1}(1/x)}{x^nU_{n}(1/x)}=
\frac{P_n(x)}{Q_n(x)},
\end{equation}
where $P_n(x)=x^{n+1}T_{n+1}(1/x)$ and $Q_n(x)=x^{n}U_{n}(1/x)$.
\bg
It is known that $U_n$ has $n$ simple zeros, namely $\{\cos(k\theta_n):1\leq k\leq n\}$ where $\theta_n=\pi/(n+1)$. So, if we define

\begin{align*}
\Delta_{2m}&=\{1,2,\ldots,2m\}\\
\noalign{\hbox{ and}}\\
\Delta_{2m-1}&=\{1,2,\ldots,2m-1\}\setminus\{m\},
\end{align*}
then the singular points of the rational function ${P_n}/{Q_n}$ are $\{\lambda_{k}:k\in\Delta_n\}$
with $\lambda_{k}=\sec(k\theta_n)$. Moreover, since
\begin{align*}
P_n(\lambda_k)&=\lambda_k^{n+1}T_{n+1}\left(\cos(k\theta_n)\right)\\
&=\lambda_k^{n+1}\cos(\pi k)=(-1)^k\lambda_k^{n+1}\ne0,
\end{align*}

we conclude that these singular points are, in fact, simple poles with residues given by
\begin{equation*}
\hbox{Res}\left(\frac{P_n}{Q_n},\lambda_k\right)
=-\lambda_k^3\frac{ T_{n+1}(1/\lambda_k)}{U_n^\prime(1/\lambda_k)}.
\end{equation*}
But, from the identity $U_n(\cos\vf)=\sin((n+1)\vf)/\sin\vf$ we conclude that
\begin{equation*}
U_n^\prime(\cos\vf)=\frac{\cos\vf \sin((n+1)\vf)}{\sin^3\vf}
-(n+1)\frac{\cos((n+1)\vf)}{\sin^2\vf},
\end{equation*}
hence, 
\begin{equation*}
U_n^\prime\left(\cos(k\theta_n)\right)=(n+1)\frac{(-1)^{k+1}}{\sin^2(k\theta_n)},
\end{equation*}
and finally, 
\begin{equation}\label{E:eq6}
\hbox{Res}\left(\frac{P_n}{Q_n},\lambda_k\right)
=\frac{\lambda_k^3}{n+1}\sin^2(k\theta_n )
=\frac{\lambda_k}{n+1}\tan^2(k\theta_n ).
\end{equation}
From this we conclude that the rational function $R_n$ defined by
\begin{align}
R_n(x)&=\frac{P_n(x)}{Q_n(x)}-\frac{1}{n+1}\sum_{k\in\Delta_n}\frac{\lambda_k\tan^2(k\theta_n )}{x-\lambda_k}\notag\\
&=\frac{P_n(x)}{Q_n(x)}+\frac{1}{n+1}\sum_{k\in\Delta_n}\frac{\tan^2(k\theta_n )}{1-x\cos(k\theta_n )},
\label{E:eq7}
\end{align}

is, in fact, a polynomial, and $\deg R_n=\deg P_n-\deg Q_n\leq (n+1)-(n-1)=2$. \bg

Noting that $k\mapsto n+1-k$ is a permutation of $\Delta_n$ we conclude that
\begin{align*}
\sum_{k\in\Delta_n}\frac{\tan^2(k\theta_n )}{1-x\cos(k\theta_n )}&=
\sum_{k\in\Delta_n}\frac{\tan^2(k\theta_n )}{1+x\cos(k\theta_n )}\\
&=\frac{1}{2}\sum_{k\in\Delta_n}\left(\frac{\tan^2(k\theta_n )}{1+x\cos(k\theta_n )}+\frac{\tan^2(k\theta_n )}{1-x\cos(k\theta_n )}\right)\\
&=\sum_{k\in\Delta_n}\frac{\tan^2(k\theta_n )}{1-x^2\cos^2(k\theta_n )}
\end{align*}
and using the fact that
\begin{equation*}
\frac{1}{1-z \cos^2\vf}=1+z\cos^2\vf+\frac{z^2\cos^4\vf}{1-z \cos^2\vf},
\end{equation*}
we find
\begin{align*}
\sum_{k\in\Delta_n}\frac{\tan^2(k\theta_n )}{1-x\cos(k\theta_n )}
&=\sum_{k\in\Delta_n}\tan^2(k\theta_n )
+x^2\sum_{k\in\Delta_n}\sin^2(k\theta_n )\\
&\qquad+
x^4 \sum_{k=1}^n\frac{\cos^2(k\theta_n ) \sin^2(k\theta_n )}{1-x^2\cos^2(k\theta_n )}
\end{align*}
where we added a ``zero'' term to the last sum for odd $n$. 

Combining this conclusion with \eqref{E:eq7}
we conclude that there exists a polynomial $S_n$ with $\deg S_n\leq 2$ such that
\begin{equation*}
S_n(x)=\frac{P_n(x)}{Q_n(x)}+\frac{x^4}{4(n+1)} \sum_{k=1}^n\frac{\sin^2(2k\theta_n )}{1-x^2\cos^2(k\theta_n )}.
\end{equation*}
Moreover, $S_n$ is even, since both  $P_n$ and $Q_n$ are even. Thus, going back to \eqref{E:eq5} we conclude  that there exists two constants $\alpha_n$ and $\beta_n$ such that
\begin{equation}\label{E:eq8}
V_n(z)=\alpha_n+\beta_n z
+\frac{z^2}{4(n+1)} \sum_{k=1}^n\frac{ \sin^2(2k\theta_n )}{1-z\cos^2(k\theta_n )}
\end{equation}
But, from Corollary \ref{co2}, we also have $V_n(z)=1-\frac{1}{2}z+O(z^2)$ for $n\geq 1$, thus
$\alpha_n=1$ and $\beta_n=-1/2$. 

\qquad Finally, noting that the terms corresponding to $k$ and $n+1-k$ in the sum \eqref{E:eq8}
are identical, we arrive to the desired formula.
This concludes the proof of Theorem \ref{th21}.
\end{proof}

\bg
\qquad Theorem \ref{th21} allows us to obtain a precise information about the power series
 expansion of $V_n$ in the neighbourhood of $z=0$ :\bg

\begin{corollary}\label{co22}
Let $n$ be a positive integer greater than 1, and let $V_n$ be the rational function defined by \eqref{E:eqv}.  
Then the radius of convergence of power series expansion $\sum_{m=0}^\infty A_{m}^{(n)}z^m$ of $V_n$ in the neighbourhood of $z=0$ is $\sec^2(\frac{\pi}{n+1})>1$, and the coefficients $(A_{m}^{(n)})_{m\geq0}$ satisfy the following properties :
\begin{enumeratea}
\item For $0\leq m\leq n$ we have $A_{m}^{(n)}=\lambda_m$, where $\lambda_m$ is defined by
\eqref{E:bin1}.\label{coit221}
\item For $m>n$ we have $A_{m}^{(n)}<0$ and 
\[
\sum_{m=n+1}^\infty\left(-A_{m}^{(n)}\right)=\frac{1}{2^{2n}}\binom{2n}{n}-\frac{1}{n+1}.
\]\label{coit222}
\end{enumeratea}
Moreover, for every $n\geq0$ and every $z$ in the closed unit disk $\adh{D(0,1)}$ we have
\[
\abs{V_n(z)-\sqrt{1-z}}\leq \frac{2}{\sqrt{\pi n}}\abs{z}^{n+1}.
\]
 In particular, $(V_n)_{n\geq0}$ converges uniformly on  $\adh{D(0,1)}$ to the function $z\mapsto\sqrt{1-z}$.
\end{corollary}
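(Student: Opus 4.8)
The plan is to extract all four assertions directly from the partial fraction formula of Theorem~\ref{th21}, reading off the power series term by term, and to combine this with the coefficient information already recorded in Corollary~\ref{co2}. Writing $\theta_n=\pi/(n+1)$ and $c_k=\cos^2(k\theta_n)$, I would first expand each summand of the formula in Theorem~\ref{th21} as a geometric series, valid for $\abs{z}<1/c_k=\sec^2(k\theta_n)$, to obtain
\[
V_n(z)=1-\frac{z}{2}-\frac{1}{2(n+1)}\sum_{m=0}^\infty\Bigl(\,\sum_{k=1}^{\floor{n/2}}\sin^2(2k\theta_n)\,c_k^{\,m}\Bigr)z^{m+2}.
\]
Since each $\sin^2(2k\theta_n)\geq0$ and the $k=1$ term is strictly positive (as $\sin^2(2\theta_n)\neq0$ for $n\geq2$), every inner sum is positive, hence $A_m^{(n)}<0$ for all $m\geq2$, in particular for $m>n$. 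The radius of convergence is the distance to the nearest pole; the poles sit at $z=\sec^2(k\theta_n)$, the closest being $\sec^2(\pi/(n+1))>1$, and this is a genuine pole because its residue carries the factor $\sin^2(2\theta_n)\neq0$. Part~\itemref{coit221} then needs nothing new: it is exactly \eqref{E:eq4}.

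For the closed form in~\itemref{coit222}, the key idea is to sum the whole series at the boundary point $z=1$. Because the radius of convergence exceeds $1$, the series converges absolutely there and equals $V_n(1)$; evaluating the closed expression for $V_n$ displayed in the proof of Corollary~\ref{co2} as $\sqrt{1-z}\to0$ gives the clean value $V_n(1)=1/(n+1)$. Separately, from \eqref{E:bin1} I would verify the telescoping identity $\lambda_m=\mu_m-\mu_{m-1}$ for $m\geq1$, with $\mu_m=2^{-2m}\binom{2m}{m}$, so that $\sum_{m=0}^n\lambda_m=\mu_n=2^{-2n}\binom{2n}{n}$. Subtracting the head from the total and using the sign of the tail coefficients yields
\[
\sum_{m=n+1}^\infty\bigl(-A_m^{(n)}\bigr)=\sum_{m=0}^n\lambda_m-V_n(1)=\frac{1}{2^{2n}}\binom{2n}{n}-\frac{1}{n+1}.
\]

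For the uniform estimate I would exploit that, by~\itemref{coit221}, the power series of $V_n$ and of $\sqrt{1-z}=\sum_m\lambda_mz^m$ (valid on $\adh{D(0,1)}$ by \eqref{E:bin}) share their first $n+1$ coefficients. Thus for $z\in\adh{D(0,1)}$,
\[
\abs{V_n(z)-\sqrt{1-z}}=\Bigl|\sum_{m=n+1}^\infty\bigl(A_m^{(n)}-\lambda_m\bigr)z^m\Bigr|\leq\abs{z}^{n+1}\sum_{m=n+1}^\infty\abs{A_m^{(n)}-\lambda_m},
\]
using $\abs{z}^m\leq\abs{z}^{n+1}$ for $m\geq n+1$. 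The two tail sums $\sum_{m>n}(-A_m^{(n)})$ and $\sum_{m>n}(-\lambda_m)$ have positive terms and evaluate to $\mu_n-\frac1{n+1}$ (by~\itemref{coit222}) and to $\mu_n$ (since $\sum_{m\geq0}\lambda_m=0$ while $\sum_{m=0}^n\lambda_m=\mu_n$), respectively, so the triangle inequality bounds the last sum by $2\mu_n-\frac1{n+1}<2\mu_n$. Finally the standard inequality $\mu_n=2^{-2n}\binom{2n}{n}\leq1/\sqrt{\pi n}$ gives the claimed bound; the case $n=1$ is identical (all tail coefficients of $V_1$ vanish) and $n=0$ is vacuous, while uniform convergence on $\adh{D(0,1)}$ follows at once.

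The routine parts, namely the geometric expansion, the sign reading, and the final triangle-inequality estimate, present no difficulty. The one step needing genuine care is the evaluation of the infinite sum in~\itemref{coit222}: it hinges on legitimately summing the power series at $z=1$, which is precisely where the strict inequality $\sec^2(\pi/(n+1))>1$ for the radius is indispensable, and on recognizing the two closed forms $V_n(1)=1/(n+1)$ and $\sum_{m=0}^n\lambda_m=\mu_n$. I expect this boundary evaluation together with the telescoping of the $\lambda_m$ to be the crux; the combinatorial inequality $\binom{2n}{n}\leq4^n/\sqrt{\pi n}$ needed for the uniform bound is standard but should be cited explicitly.
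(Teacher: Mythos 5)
Your proposal is correct and follows essentially the same route as the paper: poles read off Theorem \ref{th21} for the radius, geometric expansion of the partial fractions for the sign of the tail coefficients, evaluation at $z=1$ (legitimate since the radius exceeds $1$) combined with the telescoping $\lambda_m=\mu_m-\mu_{m-1}$ for the tail sum, and a triangle inequality through the common partial sum for the uniform bound. The only cosmetic differences are that you compute $V_n(1)=1/(n+1)$ by letting $\sqrt{1-z}\to0$ in the closed form rather than by induction on the recursion \eqref{E:eqv}, and you merge the paper's two separate tail estimates into a single bound on $\sum_{m>n}\abs{A_m^{(n)}-\lambda_m}$.
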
 
\bg
\begin{proof}
Let us denote $\pi/(n+1)$ by $\theta_n$. By Theorem \ref{th21} the poles of $V_n$, for $n>1$,  are $\{\sec^2(k\theta_n):1\leq k\leq\floor{n/2}\}$ and the nearest one to $0$ is $\sec^2(\theta_n)$. This proves the statement about the radius of convergence.
\bg
\qquad Also, we have seen in Corollary \ref{co2},  that in the neighbourhood of $z=0$
we have
\[
V_n(z)=\sum_{m=0}^n\lambda_mz^m+O(z^{n+1}).
\]
This proves that for $0\leq m\leq n$ we have $A_m^{(n)}=\lambda_m$ which is \itemref{coit221}.

\qquad To prove \itemref{coit222}, we note that for $1\leq k\leq \floor{n/2}$, and  $z\in D(0,\sec^2(\theta_n)$,  we have
\[
\frac{1}{1-z\cos^2(k\theta_n)}=\sum_{m=0}^\infty\cos^{2m}(k\theta_n)z^m,
\]
hence
\[
V_n(z)=1-\frac{z}{2}-\sum_{m=0}^\infty\left(\frac{1}{2(n+1)}\sum_{k=1}^{\floor{n/2}}\cos^{2m}(k\theta_n)
\sin^2(2k\theta_n)\right)\,z^{m+2}.
\]
This gives the following alternative formul\ae~  for $A_{m}^{(n)}$ when $m\geq2$ :
\begin{align}
A_{m}^{(n)}&=-\frac{2}{n+1}\sum_{k=1}^{\floor{n/2}}\cos^{2(m-1)}(k\theta_n)
\sin^2(k\theta_n),\notag\\
&=-\frac{1}{n+1}\sum_{k=1}^{n}\cos^{2(m-1)}(k\theta_n)
\sin^2(k\theta_n),\label{E:eq9}
\end{align}
where \eqref{E:eq9} is also valid for $m=1$. This proves in particular that $A_{m}^{(n)}<0$ for $m>n$.
\bg
\qquad Since the radius of convergence of $\sum_{m=0}^\infty A_m^{(n)}z^m$ is greater than $1$
we conclude that
\begin{equation}\label{E:eq10}
\sum_{m=0}^\infty A_m^{(n)}=V_n(1),
\end{equation}
but, we can prove by induction from \eqref{E:eqv} that $V_n(1)=1/(n+1)$. Therefore, \eqref{E:eq10} implies that, for $n\geq 1$ we have
\begin{align*}
\sum_{m=n+1}^\infty (-A_m^{(n)})&=-\frac{1}{n+1}+\sum_{m=0}^n A_m^{(n)}=-\frac{1}{n+1}+1-\sum_{m=1}^n \lambda_m,\\
&=-\frac{1}{n+1}+1-\sum_{m=1}^n \left(\mu_{m-1}-\mu_m\right),\\
&=\mu_n-\frac{1}{n+1},
\end{align*}
where $\mu_m=2^{-2m}\binom{2m}{m}$, which is \itemref{coit222}.

\qquad For $n\geq 1$ and $z\in\adh{D(0,1)}$ we have
\begin{align}
\abs{V_n(z)-\sum_{m=0}^n\lambda_m z^m}&\leq
\abs{\sum_{m=n+1}^\infty A_{m}^{(n)} z^m}\notag\\
&\leq\abs{z}^{n+1}\cdot\sum_{m=n+1}^\infty (-A_{m}^{(n)})\notag\\
&\leq\left(\frac{1}{2^{2n}}\binom{2n}{n}-\frac{1}{n+1}\right)\abs{z}^{n+1}.\label{E:eq11}
\end{align}
On the other hand, using \eqref{E:bin} we see that for $n\geq 1$ and $z\in\adh{D(0,1)}$ we have
 \begin{align}
\abs{\sqrt{1-z}-\sum_{m=0}^n\lambda_m z^m}&\leq
\abs{\sum_{m=n+1}^\infty \lambda_m z^m}\notag\\
&\leq\abs{z}^{n+1}\cdot \sum_{m=n+1}^\infty (\mu_{m-1}-\mu_{m})\notag\\
&\leq\frac{1}{2^{2n}}\binom{2n}{n}\abs{z}^{n+1}.\label{E:eq12}
\end{align}
Combining \eqref{E:eq11} and \eqref{E:eq12}, and noting that $2^{-2n}\binom{2n}{n}\leq 1/\sqrt{\pi n}$ we obtain
\[
\abs{V_n(z)-\sqrt{1-z}}\leq \frac{2}{\sqrt{\pi n}}\abs{z}^{n+1},
\]
which is the desired conclusion.
\end{proof}
\bg
\qquad The following corollary is an immediate consequence of Lemma \ref{lm1} and Corollary \ref{co22}. It proves
that Conjecture 12 in \cite{guo} is correct in the case of square roots. 
\bg
\begin{corollary}\label{co3} The following properties of $(F_n(z))_{n\geq0}$  and
$(G_n(z))_{n\geq0}$, the
Newton's and Halley's approximants  of $\sqrt{1-z}$, defined by the recurrences \eqref{E:eqf} and \eqref{E:eqg} hold :

\begin{enumeratea}
\item For $n>1$, the rational function $F_n(z)$ has a power series expansion $1-\sum_{m=1}^\infty B_{m}^{(n)}z^m$ with $\sec^2(2^{-n}\pi)$ as
radius of convergence, and $B_{m}^{(n)}>0$ for every $m\geq1$. Moreover,
\[
\forall\,z\in\adh{D(0,1)},\qquad\abs{F_n(z)-\sqrt{1-z}}\leq\frac{2}{\sqrt{\pi}}\cdot\frac{\abs{z}^{2^n}}{\sqrt{2^{n}-1}}.
\]
\item For $n\geq1$, the rational function $G_n(z)$ has a power series expansion $1-\sum_{m=1}^\infty C_{m}^{(n)}z^m$ with $\sec^2(3^{-n}\pi)$ as radius of convergence, and $C_{m}^{(n)}>0$ for every $m\geq1$. Moreover,
\[
\forall\,z\in\adh{D(0,1)},\qquad\abs{G_n(z)-\sqrt{1-z}}\leq\frac{2}{\sqrt{\pi}}\cdot\frac{\abs{z}^{3^n}}{\sqrt{3^{n}-1}}.
\]
\end{enumeratea}
\end{corollary}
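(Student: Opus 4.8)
The plan is to deduce everything directly from Corollary \ref{co22} through the identifications $F_n=V_{2^n-1}$ and $G_n=V_{3^n-1}$ supplied by Lemma \ref{lm1}(b). Since parts (a) and (b) are structurally identical, I would handle them in parallel by setting $N=2^n-1$ in the Newton case and $N=3^n-1$ in the Halley case. In either case the hypothesis $N>1$ required by Corollary \ref{co22} holds exactly on the stated range of $n$: the condition $n>1$ forces $N=2^n-1\geq3$ for $F_n$, while $n\geq1$ forces $N=3^n-1\geq2$ for $G_n$. This is the one place where the precise ranges of $n$ in the two parts come into play.

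With $V_N$ in hand, I would first read off the radius of convergence. Corollary \ref{co22} gives $\sec^2(\pi/(N+1))$, and substituting $N+1=2^n$ and $N+1=3^n$ converts this into $\sec^2(2^{-n}\pi)$ and $\sec^2(3^{-n}\pi)$, as claimed. Next I would transfer the sign pattern. Writing the expansion as $V_N(z)=\sum_{m=0}^\infty A_m^{(N)}z^m$, I would set $B_m^{(n)}=-A_m^{(N)}$ (respectively $C_m^{(n)}=-A_m^{(N)}$); since $A_0^{(N)}=\lambda_0=1$ this produces the form $1-\sum_{m\geq1}B_m^{(n)}z^m$, and it remains only to check $A_m^{(N)}<0$ for every $m\geq1$. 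This splits into two regimes that must be combined: for $1\leq m\leq N$ Corollary \ref{co22}(a) gives $A_m^{(N)}=\lambda_m=\mu_{m-1}-\mu_m<0$, while for $m>N$ Corollary \ref{co22}(b) gives $A_m^{(N)}<0$ outright. Together these cover all $m\geq1$, yielding $B_m^{(n)}>0$ (respectively $C_m^{(n)}>0$).

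Finally, the error estimates are immediate substitutions into the uniform bound $\abs{V_N(z)-\sqrt{1-z}}\leq\frac{2}{\sqrt{\pi N}}\abs{z}^{N+1}$ valid on $\adh{D(0,1)}$: with $N=2^n-1$ this reads $\abs{F_n(z)-\sqrt{1-z}}\leq\frac{2}{\sqrt{\pi}}\cdot\frac{\abs{z}^{2^n}}{\sqrt{2^n-1}}$, and with $N=3^n-1$ it gives the corresponding Halley bound. I do not expect any serious obstacle: the entire content is already packaged inside Corollary \ref{co22}, and the only matters requiring attention are verifying that $N>1$ on the correct ranges and observing that the negativity of the coefficients is uniform only after merging the two regimes $1\leq m\leq N$ and $m>N$, the first of which relies on $\lambda_m<0$ for $m\geq1$.
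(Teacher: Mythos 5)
Your proposal is correct and follows exactly the route the paper intends: the paper gives no written proof beyond declaring the corollary an immediate consequence of Lemma \ref{lm1}(b) and Corollary \ref{co22}, and your substitutions $N=2^n-1$, $N=3^n-1$ (including the check that $N>1$ on the stated ranges and the merging of the two sign regimes $1\leq m\leq N$ and $m>N$ via $\lambda_m=\mu_{m-1}-\mu_m<0$) supply precisely the omitted details.
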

\bg
\textit{Remark.} Guo's conjecture \cite[Conjecture 12]{guo} in the case of square roots is just the fact that $B_m^{(n)}>0$ for $n>1$, $m\geq1$, and that
$C_m^{(n)}>0$ for $n,~m\geq1$. The case of $p$th roots for $p\geq3$ remains open.
\bg
\textbf{Conclusion.} In this paper, we presented a link between the rational functions approximating $z\mapsto\sqrt{1-z}$
obtained from the application of Newton's and Halley's method, and Chebyshev Polynomials. This was used
to find the partial fraction decomposition of this rational functions, and the sign pattern of the coefficients of their power series
expansions was obtained. Finally, this was used to prove the square root case of Conjecture 12 in \cite{guo}, and to give estimates of the approximation error for $z$ in the closed unit disk.


\end{document}